\theoremstyle{plain}
\newtheorem{theorem}{Theorem}[section]
\newtheorem*{theorem*}{Theorem}
\newtheorem{lemma}[theorem]{Lemma}
\newtheorem{conjecture}[theorem]{Conjecture}
\theoremstyle{definition}
\newtheorem{remark}[theorem]{Remark}
\newtheorem{example}[theorem]{Example}
\newtheorem{question}[theorem]{Question}
\newcommand{\enm}[1]{\ensuremath{#1}}          %
\newcommand{\op}[1]{\operatorname{#1}}
\newcommand{\cal}[1]{\mathcal{#1}}
\newcommand{\PP}{\enm{\mathbb{P}}}
\newcommand{\Cc}{\enm{\cal{C}}}
\newcommand{\Dd}{\enm{\cal{D}}}
\newcommand{\Ee}{\enm{\cal{E}}}
\newcommand{\Ff}{\enm{\cal{F}}}
\newcommand{\Gg}{\enm{\cal{G}}}
\newcommand{\Hh}{\enm{\cal{H}}}
\newcommand{\Ll}{\enm{\cal{L}}}
\newcommand{\Mm}{\enm{\cal{M}}}
\newcommand{\Oo}{\enm{\cal{O}}}
\newcommand{\Xx}{\enm{\cal{X}}}
\renewcommand{\phi}{\varphi}
\renewcommand{\theta}{\vartheta}
\renewcommand{\epsilon}{\varepsilon}
\newcommand{\Aut}{\op{Aut}}
\newcommand{\codim}{\op{codim}}
\renewcommand{\to}[1][]{\xrightarrow{\ #1\ }}
\newcommand{\old}[1]{}
\newcommand{\vni}{\vskip 4pt \noindent}
\newcommand{\comu}{\codim_{\Mm_g}\mu{(\Hh)}}
\newcommand{\h}{\ensuremath{\mathcal}}
\newcommand{\HL}{\ensuremath{\mathcal{H}^\mathcal{L}_}}
\newcommand{\HO}{\ensuremath{\mathcal{H}^\mathcal{}_}}
\begin{document}

\title[Hilbert scheme of curves of unexpected dimension,  moduli number
]
{Hilbert scheme of 
smooth projective curves
of unexpected  dimension \& existence of a component with less than the expected number of moduli
}


\author[Changho Keem]{Changho Keem}
\address{
Department of Mathematics,
Seoul National University\\
Seoul 151-742,  
South Korea}

\email{ckeem@snu.ac.kr \& ckeem1@gmail.com}



\subjclass{Primary 14C05, Secondary 14H10, 14H51}

\keywords{Hilbert scheme, projective algebraic curves, complete linear series, expected number of moduli}

\date{\today}
\maketitle
\begin{abstract}
We denote by $\mathcal{H}_{d,g,r}$ the Hilbert scheme of smooth curves of degree $d$ and genus $g$ in $\mathbb{P}^r$. Denoting by $\mathcal{M}_g$ the moduli space of smooth curves of genus $g$, let $\mu: \mathcal{H}_{d,g,r}\dasharrow \mathcal{M}_g$ be the natural map sending $X\in\mathcal{H}_{d,g,r}$ to its isomorphism class $\mu (X)=[X]\in\mathcal{M}_g$.

It has been conjectured that a component $\mathcal{H}\subset\mathcal{H}_{d,g,r}$ has the minimal possible dimension $$\Xx(d,g,r):=3g-3+\rho(d,g,r)+\dim\operatorname{Aut}(\mathbb{P}^r)$$ \noindent if $\codim_{\mathcal{M}_g}\mu(\mathcal{H})\le g-5$ provided $\Xx(d,g,r)\ge 0$, where $\rho(d,g,r):=g-(r+1)(g-d+r)$ is the Brill-Noether number. In this article, we exhibit examples against the conjecture discuss further for the study of the functorial map $\mu: \mathcal{H}{d,g,r}\dasharrow\mathcal{M}_g$ along this line.

A component $\mathcal{H}\subset \mathcal{H}_{d,g,r}$ is said to have the {\it expected number of moduli} if $$\dim\mu({\Hh})=\min\{3g-3, 3g-3+\rho(d,g,r)\},$$provided $3g-3+\rho(d,g,r)\ge 0$. The existence of a component with strictly less than the expected number of moduli has not been known. In this paper, we show the existence of components with less than the expected number of moduli.\end{abstract}
\section{\quad An overview, preliminaries and basic set-up}


Given non-negative integers $d$, $g$ and $r\ge 3$, let $\mathcal{H}_{d,g,r}$ be the Hilbert scheme of smooth projective curves,
which is the union of components whose general point corresponds to a smooth irreducible and non-degenerate curve of degree $d$ and genus $g$ in $\PP^r$.
Denoting the moduli space of smooth curves of genus $g$ by $\Mm_g$,
 let  $$\mu: \HO{d,g,r}\dasharrow \Mm_g$$ be the natural rational map - which we call the {\it moduli map} - sending $X\in\HO{d,g,r}$ to its isomorphism class
$\mu (X)=[X]\in\h{M}_g$.

In this paper, we are interested in the relationship between 
$\codim_{\Mm_g}\mu(\Hh)$ and $\dim\Hh$.  For any irreducible component $\Hh\subset\HO{d,g,r}$, it is known that 
\begin{equation}\label{md}
\dim\Hh\ge\Xx(d,g,r):=\dim\Mm_g+\rho(d,g,r)+\dim\Aut(\PP^r).
\end{equation}

\noindent
In the {\it Brill-Noether range}, i.e.  when  $$\rho (d,g,r):=g-(r+1)(g-d+r)\ge 0,$$
there exits a {\it unique} component $$\Hh_0\subset\HO{d,g,r}$$
dominating $\Mm_g$ \cite[Theorem 19.9, page 348]{EH2}. Furthermore,  this component $\Hh_0$ has the {\it expected dimension}
$$\dim\Hh_0=\Xx(d,g,r).$$ On the other hand, still under the assumption $\rho(d,g,r)\ge 0$,  it is not totally clear
if $\Hh_0$ is the only component having the minimal possible (and expected) dimension. A priori there may exist a component $\Hh\subset\HO{d,g,r}$
with $\dim\Hh=\Xx(d,g,r)$ which does not dominate $\Mm_g$. If this occurs, this would lead to the reducibility of $\Hh_{d,g,r}$ as was seen in \cite{Keem}.
At this point, one may raise the following naive but natural questions. 

\begin{itemize}
\item[(A)]
Suppose $\rho(d,g,r)\ge 0$. Identify all (or as many as possible) the components $\Hh\subset\Hh_{d,g,r}$ with $\dim\Hh=\Xx(d,g,r)$. 

\item[(B)] Without the assumption  $\rho(d,g,r)\ge 0$, is it possible 
to make an estimate (or upper bound) for 
$\codim_{\Mm_g}\mu{(\Hh)}$ -- hopefully linearly in $g$ --
such that $\dim\Hh=\Xx(d,g,r)$?
\end{itemize}

Regarding  question (A), it is merely known that there exists a component $\Hh\subset \Hh_{d,g,r}$ other than $\Hh_0$ with $\dim\Hh=\Xx(d,g,r)$; cf. \cite[Theorem 3.4]{Keem}, \cite[Propositions 3.3 \& 3.5]{CKP} or Example \ref{trigonal} below. We will reproduce one typical type  of this kind in 
 Example \ref{trigonal}.
 However, all such known examples occur as a component whose general elements are {\it non-linearly normal} curves under the 
assumption $\rho(d,g,r)\ge 0$. We will be more specific with this phenomena in the next section; Question \ref{linearlynormal}.

Outside the Brill-Noether range $\rho(d,g,r)<0$, no component $\Hh\subset\HO{d,g,r}$ dominates $\Mm_g$ by the
Brill-Noether theorem. 
However, one may still ask {\it under what numerical or geometrical condition, does there exist a component $\Hh$ 
with $\dim\Hh=\Xx(d,g,r)$?} 
\, Loosely  speaking, one expects that if $\codim_{\Mm_g}\mu(\Hh)$ gets smaller,  curves in $\Hh$ tend 
to be nearer to a general curve and therefore it is likely that they would behave like curves
with general moduli.  Thus  it is natural to expect that 
the estimate $\dim\Hh=\Xx(d,g,r)$ may continue 
to hold for a component consisting curves that are not too special.
Along the line of these thoughts, the following has been suggested \cite[Conjecture, pp142--143]{H0}.

\begin{itemize}
\item[(C)] Find a function $\beta (g)$ (probably linear in $g$), which ensures the existence of a component
$\Hh\subset\HO{d,g,r}$ of the expected dimension in case $$\codim_{\Mm_g}\mu(\Hh)\le \beta(g).$$
\end{itemize}
Specifically  one finds the following updated conjecture which has appeared recently \cite[Conjecture 19.6, page 344]{EH2}.
\begin{conjecture}\label{con} If $\codim_{\Mm_g}\mu(\Hh)\le \beta(g)=g-5$, then $\dim\Hh=\Xx(d,g,r)$.
\end{conjecture}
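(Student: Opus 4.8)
The plan is to convert the statement into a fibre-dimension count for the moduli map and then into a bound on a Brill--Noether locus on the general curve of $\mu(\Hh)$. First I would fix a general $X\in\Hh$, regard it abstractly as a curve $C$ with $[C]=\mu(X)\in\mu(\Hh)$, and compute $\dim\Hh$ by the usual fibration argument. A point of $\HO{d,g,r}$ lying over $[C]$ is the image of an embedding $C\hookrightarrow\PP^r$; such embeddings are parametrised by a very ample, base-point-free $g^r_d$ on $C$ together with a projective frame, so they form a torsor under $\Aut(\PP^r)$ over the relevant component $G^r_d(C)$ of the variety of linear series, and two embeddings have the same image exactly when they differ by an element of $\Aut(C)$. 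Hence
$$\dim\Hh=\dim\mu(\Hh)+\dim G^r_d(C)+\dim\Aut(\PP^r)-\dim\Aut(C).$$
The hypothesis $\codim_{\Mm_g}\mu(\Hh)\le g-5$ already does real work: the locus of curves with a nontrivial automorphism has codimension at least $g-2$ in $\Mm_g$, so $\mu(\Hh)$ cannot lie inside it, and the general $C\in\mu(\Hh)$ has $\dim\Aut(C)=0$.

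With $\dim\Aut(C)=0$ and $\dim\mu(\Hh)=3g-3-\codim_{\Mm_g}\mu(\Hh)$, the displayed identity together with the definition of $\Xx(d,g,r)$ reduces the desired equality $\dim\Hh=\Xx(d,g,r)$ to the single inequality
$$\dim G^r_d(C)\le \rho(d,g,r)+\codim_{\Mm_g}\mu(\Hh).\quad(\star)$$
Indeed, the reverse inequality is exactly the content of the universal bound \eqref{md} read through the same fibration, so $(\star)$ would force equality throughout. Equivalently, if $\Gamma\subset\Gg^r_d$ denotes the component of the relative variety of linear series dominating $\mu(\Hh)$ with general fibre $G^r_d(C)$, then $(\star)$ says precisely that $\dim\Gamma\le 3g-3+\rho(d,g,r)$, the expected dimension of $\Gg^r_d$; since every component of this determinantal locus has dimension at least $3g-3+\rho(d,g,r)$, proving $(\star)$ amounts to showing that $\Gamma$ has exactly the expected dimension. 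The whole problem is thus to bound the excess $\dim G^r_d(C)-\rho(d,g,r)$ of the special curve $C$ by the codimension of the locus of such curves.

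To establish $(\star)$ I would attack the excess dimension of $G^r_d(C)$ by degeneration, following the Eisenbud--Harris method of limit linear series: specialise $C$ to a chain of elliptic curves (or a suitable flag curve), bound the dimension of the scheme of limit $g^r_d$'s on the central fibre vanishing-sequence by vanishing-sequence, and conclude by upper semicontinuity that $\dim G^r_d(C)$ is controlled. The threshold $g-5$ is meant to quantify how special $C$ may be: the gonality stratum $\Mm^1_{g,k}$ has codimension $g-2k+2$, so $\codim_{\Mm_g}\mu(\Hh)\le g-5$ is the regime lying strictly beyond the trigonal locus (codimension $g-4$), where one hopes the Pflueger and Coppens--Martens type estimates for the dimension of $W^r_d$ on gonal curves feed back an excess bound matching $\codim_{\Mm_g}\mu(\Hh)$.

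The hard part, and the step where I expect the argument to resist, is precisely $(\star)$. There is no formal mechanism forcing the excess $\dim G^r_d(C)-\rho(d,g,r)$ to be bounded by $\codim_{\Mm_g}\mu(\Hh)$: the bound \eqref{md} goes the wrong way, and the limit-linear-series estimates control the fibre dimension only through discrete data (the vanishing sequences), which need not interact with the codimension of $\mu(\Hh)$ in the required linear fashion. In effect one is demanding that a low-codimension family of curves cannot carry linear series of disproportionately large dimension, and it is exactly here---just past the trigonal threshold---that the expected balance between fibre excess and base codimension may break down. I would therefore treat $(\star)$ as the crux on which the statement stands or falls, and would first test it on families of curves carrying a fixed low-degree pencil before attempting the general case.
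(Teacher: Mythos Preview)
The statement you are attempting to prove is labelled a \emph{conjecture}, and the paper does not prove it; on the contrary, the paper's purpose is to exhibit counterexamples to it (Examples~\ref{first} and~\ref{extensive}). So there is no proof in the paper to compare against, and any argument purporting to establish the statement in general must contain a genuine gap.

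Your reduction is sound and pinpoints the issue correctly: the fibration identity
\[
\dim\Hh=\dim\mu(\Hh)+\dim G^r_d(C)+\dim\Aut(\PP^r)
\]
(for general $C$, with $\dim\Aut(C)=0$) together with \eqref{md} shows that $\dim\Hh=\Xx(d,g,r)$ is equivalent to your inequality $(\star)$. You are also right that $(\star)$ is exactly where the argument must break, and the paper confirms this by explicit example. Take $r\ge 15$, $g=r+7$, $d=2r+4$ as in Example~\ref{first}. On a general $4$-gonal curve $C$ the series $|K_C-2g^1_4|$ is a complete very ample $g^r_d$; since the $g^1_4$ is unique, this gives a $0$-dimensional component of $G^r_d(C)$. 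These curves fill $\Mm^1_{g,4}$, so $\codim_{\Mm_g}\mu(\Hh)=g-6\le g-5$, yet
\[
\rho(d,g,r)+\codim_{\Mm_g}\mu(\Hh)=(4-2r)+(g-6)=5-r<0,
\]
so $(\star)$ would assert $0\le 5-r$, which is false. Equivalently $\dim\Hh=\Xx(d,g,r)+(r-5)>\Xx(d,g,r)$. Thus the very test you propose at the end---checking families with a fixed low-degree pencil---already falsifies the conjecture; the $4$-gonal locus has codimension $g-6$ but supports linear series whose excess over $\rho$ is $r-5$, which can be made arbitrarily large relative to the codimension.
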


\vni
$\blacklozenge$
In this article, we exhibit several examples of triples $(d,g,r)$ such that  Conjecture \ref{con}
is no longer valid; Examples \ref{first}, \ref{extensive}. We also make a further discussion for the study of the moduli map along this line; Remark \ref{discussion}.








\vni
$\blacklozenge$ We  study a related problem regarding the existence of a component having less than the expected number of moduli; Example \ref{kgonal}, Remark \ref{discussion3}, Question \ref{discussion4}

\vni

We say that a component $\Hh\subset\HO{d,g,r}$ has the expected number of moduli 
if $$\dim\mu(\Hh)=\min\{3g-3,3g-3+\rho(d,g,r)\}.$$
under the condition $$\lambda(d,g,r):=3g-3+\rho(d,g,r)\ge0.$$
When $\rho(d,g,r)\ge 0$, the principal component $\Hh_0$ is {\it the only} component having the 
expected number of moduli. Other components (if any) have {\it less} than the expected number of moduli.
Thus, we are interested in the existence of a component {\it with less than the expected number of moduli} only when 
$\rho(d,g,r)<0$, i.e. outside the Brill-Noether range.
We exhibit a class of  curves forming a component of a Hilbert scheme with less than the expected number of moduli
in the final section.

\section{Preliminaries, Notations and Some examples}

\subsection{Preliminaries, Notations and Conventions}
For notation and conventions, we  follow those in \cite{ACGH} and \cite{ACGH2}; e.g. $\pi (d,r)$ is the maximal possible arithmetic genus of an irreducible,  non-degenerate and reduced curve of degree $d$ in $\PP^r$.
Following classical terminology, a linear series of degree $d$ and dimension $r$ on a smooth curve is denoted by $g^r_d$.
A  linear series $\Dd=g^r_d$ ($r\ge 2$) on a smooth curve $C$ is called {\it birationally very ample} when the morphism 
$C \rightarrow \mathbb{P}^r$ induced by  the moving part of $\Dd$ is birational onto its image.
A  linear series $g^r_d$  is said to be {\it compounded} if the morphism induced by  $g^r_d$ gives rise to a non-trivial covering map $C\rightarrow C'$ of degree $k\ge 2$. Given a smooth non-degenerate curve $C\subset\PP^r$, 
the {\it residual curve of $C$} -- sometimes denoted by  $C^\vee$ -- is the image curve of the morphism induced by the moving part of $|K_C(-1)|$. 
For a complete linear series $\h{D}$ on a smooth curve $C$, the residual series $|K_C-\h{D}|$ is denoted by $\h{D}^\vee$.


We use the  following basic frameworks of our study borrowed from \cite{ACGH2}  and \cite[\S1  \& \S 2]{AC2}. Given an isomorphism class $[C] \in \mathcal{M}_g$ corresponding to a smooth irreducible curve $C$, there exist a neighborhood $U\subset \mathcal{M}_g$ of the class $[C]$ and a smooth connected variety $\mathcal{M}$ which is a finite ramified covering $h:\mathcal{M} \to U$, as well as  varieties $\mathcal{C}$, $\mathcal{W}^r_d$ \& $\mathcal{G}^r_d$ proper over $\mathcal{M}$ with the following properties:
\begin{enumerate}
\item[(1)] $\xi:\mathcal{C}\to\mathcal{M}$ is a universal curve, i.e. for every $p\in \mathcal{M}$, $\xi^{-1}(p)$ is a smooth curve of genus $g$ whose isomorphism class is $h(p)$,
\item[(2)] $\mathcal{W}^r_d$ parametrizes the pairs $(p,L)$ where $L$ is a line bundle of degree $d$ and $h^0(L) \ge r+1$ on $\xi^{-1}(p)$,
\item[(3)] $\mathcal{G}^r_d$ parametrizes the pairs $(p, \mathcal{D})$, where $\mathcal{D}$ is possibly an incomplete $g^r_d$ on $\xi^{-1}(p)$.
\end{enumerate}
Let $\widetilde{\mathcal{G}}$ ($\widetilde{\mathcal{G}}_\mathcal{L}$,  resp.) be  the union of components of $\mathcal{G}^{r}_{d}$ whose general element $(p,\mathcal{D})$ corresponds to a very ample (very ample and complete, resp.) linear series $\mathcal{D}$ on the curve $C=\xi^{-1}(p)$. Recalling that an open subset of $\mathcal{H}_{d,g,r}$ consisting of smooth irreducible and non-degenerate curves is a $\Aut(\PP^r)$-bundle over an open subset of $\widetilde{\mathcal{G}}$, the irreducibility of $\widetilde{\mathcal{G}}$ guarantees the irreducibility of $\mathcal{H}_{d,g,r}$. Likewise, the irreducibility of $\widetilde{\mathcal{G}}_\mathcal{L}$ ensures the irreducibility of 
 $\mathcal{H}_{d,g,r}^\mathcal{L}$, the union of components of $\HO{d,g,r}$ whose general element is linearly normal.
\vni
Given an irreducible family $\h{F}\subset\h{G}^r_d$ whose general member is complete,  the closure of the family $\{\h{D}^\vee| \h{D}\in \h{F}, ~\h{D} \textrm{ is complete}\}\subset \h{W}^{g-d+r-1}_{2g-2-d}$ is  sometimes denoted by $\h{F}^\vee$.
Given $r\ge s$, $\mathbb{G}(s,r)$ denotes the Grassmannian of dimension $s$ linear subspaces of $\PP^r$. 
Throughout we work 
over the field of complex numbers.

\subsection{Some old and new Examples}
We start with the following example of a {\it reducible} Hilbert scheme with $\rho(d,g,r)\ge 0$, which is a simplified version of \cite[Theorem 2.3]{Keem}. This example been known for some years, which we choose to present here with a brief outline in order to provide readers a motivation and a flavor with an emphasis on what we shall discuss in this paper.

\begin{example}\label{trigonal} (i) If $11\le g\le 16$, $\HO{2g-8,g,g-8}$ is non-empty and irreducible whose  general element is linearly normal. 

\noindent
(ii) If $g\ge 17$, there exist an extra component $\Hh\subset\HO{2g-8,g, g-8}$, $\Hh\neq\Hh_0$ with $$\comu=g-4$$ 
\[
\dim\Hh=\begin{cases}\Xx(2g-8,g,g-8) \hskip 51pt \text{ if } g=17\\
\Xx(2g-8,g,g-8)+(g-17), \text{ if } g>17.
\end{cases}\]
A general element of $\Hh$ is a trigonal curve, which is {\it not linearly normal} in $\PP^{g-8}$. 
\end{example}
\begin{proof} Since $\rho(2g-8,g,g-8)\ge0$, there is a unique component $\Hh_0$ dominating $\Mm_g$.
~Assume the existence of a 
component $\Hh\neq\Hh_0$. Recall that $\Hh$ consists of curves $C$ such that $\Oo_C(1)$ is special \cite[Chapter 2]{H1}, \cite[Proposition 1.1]{KKL}. 

Take a general $C\in\Hh$ with the hyperplane series $\Dd=g^{g-8}_{2g-8}\subset|\Oo_C(1)|$. Set $\alpha=h^1(C, \Oo_C(1))>0$. By Clifford's theorem, $1\le \alpha\le 4$. If $\alpha =4$, $|K_C(-1)|=g^3_6$ and $C$ is a hyperelliptic curve, which does carry a birationally very ample special linear series. If $1\le\alpha\le 2$, then by  H. Martens' theorem for the Brill-Noether locus $W^{\alpha-1}_6(C)$ consisting of special linear series on a fixed curve \cite[Theorem 5.16]{EH2} and then passing to the residual series' 
\begin{align*}
\dim\Hh&\le \dim\Aut(\PP^{g-8})+\dim\mathbb{G}(g-8,g+\alpha -8)+\dim W^{g+\alpha -8}_{2g-8}(C)+\dim\mu(\Hh)\\
& \le\dim\Aut(\PP^{g-8})+\dim\mathbb{G}(g-8,g+\alpha -8)+\dim W^{\alpha-1}_6(C)+3g-4\\
& \le\dim\Aut(\PP^{g-8})+\dim\mathbb{G}(g-8,g+\alpha -8)+(6-2(\alpha-1))+3g-4\\
& <\dim\Aut(\PP^{g-8})+\lambda(2g-8,g,g-8)=\Xx(2g-8,g,g-8),
\end{align*}
contradicting \eqref{md} if $g\ge 11$. 
Thus we are  left with $\alpha=3$, $\Ee:=|K_C(-1)|=g^2_6$ and $\Ee$ is compounded (since $g\ge 11$) inducing  $\eta: C\to E\subset\PP^2$. $C$ is not
bielliptic; if so, then $|K_C-\eta^*(g^2_3)|$ is not very ample; cf. Lemma \ref{easylemma}.
 Thus $C$ is trigonal, 
$\Ee=2g^1_3$ and the hyperplane series 
$$\Dd\subset |K_C(-2g^1_3)|=g^{g-5}_{2g-8}$$
is a (incomplete) subseries of the unique very ample $g^{g-5}_{2g-8}=|K_C(-2g^1_3)|$. 
It follows that there is a component $\Hh\neq \Hh_0$ over $\Gg\subset\Gg^{g-8}_{2g-8}$ consisting of incomplete very ample subseries' $g^{g-8}_{2g-8}\subset |K_C(-2g^1_3)|$ on a trigonal curve $C\in\Mm^1_{g,3}$. 
Thus $\Hh$ dominates the irreducible locus $\Mm^1_{g,3}\subset\Mm_g$ consisting of trigonal curves and 
\begin{align*}
\dim\Hh&=\dim\Aut(\PP^{g-8})+\dim\mathbb{G}(g-8,g-5)+\dim\Mm^1_{g,3}\\&=\dim\Aut(\PP^{g-8})+3(g-7)+2g+1\\
&
\ge 
\Xx(2g-8,g,g-8)=\dim\Aut(\PP^{g-8})+4g-3
\end{align*}
if $g\ge 17$. 
Therefore, we conclude

\begin{itemize}
\item[(i)]
$\HO{2g-8,g,g-8}$ is non-empty and irreducible with the only component $\Hh_0$ (the principal component)  if $11\le g\le16$. A general element is non-special and 
complete; \cite[Ch. IV, Proposition 6.1]{Hartshorne}.
\item[(ii)]
$\HO{2g-8,g,g-8}$ is reducible with two components if $g\ge 17$; the only extra component $\Hh$ dominates the locus
$\Mm^1_{g,3}$ of trigonal curves and hence $$\codim_{\Mm_g}\mu(\Hh)=g-4.$$ 
Every member $C\in\Hh$ is a {\it non linearly normal} curve \&
\[
\dim\Hh=\begin{cases}\Xx(2g-8,g,g-8) \hskip 51pt \text{ if } g=17\\
\Xx(2g-8,g,g-8)+(g-17), \text{ if } g>17.
\end{cases}\]
\end{itemize}
\end{proof}
\begin{remark} 
There abound examples of this kind which can be found in \cite[Theorem 3.4]{Keem}, \cite[Proposition 3.5]{CKP}. All such known examples
have at least one extra component consisting of non linearly normal curves. Thus, we may pose 
the following rather {\it crude} question. 
\end{remark}
\begin{question}\label{linearlynormal}
If  $\rho(d,g,r)\ge 0$, does there exist a Hilbert scheme of smooth projective curves with a component $\Hh\subset\HO{d,g,r}$ such that $\Hh\neq\Hh_0$ \& $\Hh$ generically consists of linearly normal curves ?

\end{question}
Unfortunately, the author does not know of a reasonable answer to the above question.
 As a matter of fact, some people (e.g., \cite[page 489]{CS} or [MathSciNet; AMS Mathematical Reviews MR1221726(95a:14026)]) believe that this is what Severi had in mind in the 
two papers where he claimed the irreducibility of $\HO{d,g,r}$ in the Brill-Noether range $\rho(d,g,r)\ge 0$ with incomplete proofs; \cite{Sev1}, \cite{Sev}. 

For the rest of this paper, we are solely interest in and deal with triples $(d,g,r)$ {\it outside the Brill-Noether range}, i.e. $\rho(d,g,r)<0$ unless otherwise specified.
The next example -- which is of a similar sort to Example \ref{trigonal} -- indicates that Conjecture \ref{con} {\it cannot} be revised into the one with the 
relaxed condition $$\codim_{\Mm_g}\mu(\Hh)\le g-4,$$ while imposing extra {\it but seemingly natural} condition {\it a general element in $\Hh$ being linearly normal. }
\begin{example}
(i) For $(d,g,r)=(15,16,5)$, $\HO{d,g,r}=\HL{d,g,r}$ is reducible with three components  \cite[Proposition 5.1]{rmi}. 
There is a component $\Hh$ among  three  with 
$$\codim_{\Mm_g}\mu(\Hh)=g-4 \text{ dominating } \Mm^1_{g,3}, \,\dim\Hh=68>\Xx(d,g,r)=60.$$ It can be easily shown that 
a complete very ample $\Dd=g^5_{15}$ on a curve of genus $g=16$ such that $\Dd^\vee=g^5_{15}$ is compounded is of the form $\Dd=|K_C-5g^1_3|$ on a trigonal curve and conversely. 
Obviously a general element of $\Hh$ is linearly normal. 

\vni
(ii)
 For $(d,g,r)=(14,14,5)$ , $\HO{14,14,5}=\HL{14,14,5}$ is irreducible dominating $\Mm^1_{g,3}$ \cite[Proposition 4.3(iii)]{flood}. Indeed, the same reasoning as above applies; a general $\Dd\in\Gg\subset \Gg^5_{14}$ is of the form $\Dd=|K_C-4g^1_3|$ and $$\codim_{\Mm_g}\mu(\Hh)=g-4, \dim\HO{d,g,r}=64>\Xx(14,14,5).$$
 \vni
 (iii) Therefore  $\beta(g) =g-4$ is not a right upper bound of the codimension of the image under $\mu$ of a 
 component $\Hh$ of the expected dimension
 generically consisting of linearly normal curves. 

\end{example}
\vskip 4pt
Before proceeding to the next subsection, we give  examples $\Hh\subset\HO{d,g,r}$ with $\dim\Hh=\Xx(d,g,r)$ 
such that $$\codim_{\Mm_g}\mu(\Hh)\lneq g-4,$$
thereby may possibly provide a {\it positive clue}
for the validity of the Conjecture \ref{con} with {\it smaller} $\beta(g)$. 
\begin{example}
(i)
For $(d,g,r)=(14,12,5)$, $\HL{d,g,r}$ is reducible with two components $\Hh_i, i=1,2$ of the {\it same expected dimension} $\Xx(d,g,r)$; \cite[Proposition 4.3(v)]{flood}, \cite[Theorem 3.4(ii)]{lengthy}. There is component $\Hh_1$ dominating $\Mm^1_{g,4}$ consisting of linearly normal curves embedded by $|K_C-2g^1_4|$ on general $4$-gonal curves, thus $$\codim\mu(\Hh_1)=\codim\Mm^1_{g,4}=g-6<g-5.$$
A general element of the other component $\Hh_2$ has a plane model of degree $8$ with $\delta=9$ nodal singularities.
A usual dimension count yields $$\codim_{\Mm_g}\mu(\Hh_2)=g-6<g-5.$$

\vni
(ii) For $(d,g,r)=(14,11,5)$, $\HO{d,g,r}$ is irreducible of the expected dimension dominating $\Mm^1_{g,6}$ \cite[Theorem 2.3]{BFK}
$$\codim_{\Mm_g}\mu(\HO{14,11,5})=\codim_{\Mm_g}\Mm^1_6=g-10<g-5.$$
\end{example}
\vni
\vni

\subsection{Examples against Conjecture \ref{con} with $\beta(g)=g-5$}
In this subsection, we exhibit  an example $\HO{d,g,r}$  such that  Conjecture \ref{con} with $\beta(g)=g-5$
turns out to be invalid; Example \ref{first}.
Then we will try to make sense out of this example and seek for further generalization; Example \ref{extensive}.

\begin{lemma}\label{easylemma}
{\rm(i)}
Let $C\stackrel{\eta}{\rightarrow} E$ be a double covering of a curve $E$ of genus $h\ge 1$. Let $\h{E}=g^s_{e}$ be a non-special linear series on $E$. Assume that $|\eta^*(g^s_{e})|=g^s_{2e}$. Then the base-point-free part of the complete  
$|K_C(-\eta^*(g^s_{e}))|$ is compounded.
{\rm(ii)}
Let $C$ be a trigonal curve of genus $g\ge 5$. Suppose $\dim|2g^1_3+\Delta|=2$ 
for an effective divisor $\Delta$ with $\deg\Delta>0$. Then $|K_C-2g^1_3-\Delta|$ is not very ample. 
\end{lemma}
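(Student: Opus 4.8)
For part (i), the plan is to exploit the involution $\iota$ on $C$ attached to the double cover $\eta\colon C\to E$. Since $\h{E}=g^s_e$ is non-special on $E$ and $|\eta^*(g^s_e)|=g^s_{2e}$, pulling back and using the projection formula gives $h^0(C,\eta^*L)=h^0(E,L)+h^0(E,L\otimes\eta_*\Oo_C(-\text{something}))$; concretely, $\eta_*\Oo_C=\Oo_E\oplus\Oo_E(-B)$ for the relevant line bundle $B$ with $2B\sim$ branch divisor, so $h^0(C,\eta^*L)=h^0(E,L)+h^0(E,L(-B))$, and the hypothesis $|\eta^*(g^s_e)|=g^s_{2e}$ forces $h^0(E,L(-B))=0$. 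I would then look at the residual series $|K_C-\eta^*(g^s_e)|$ and use $\eta_*K_C=K_E\oplus(K_E\otimes B^{-1})$... actually cleaner: $K_C=\eta^*K_E\otimes\Oo_C(R)$ where $R$ is the ramification divisor and $\Oo_C(R)=\eta^*B$ (the pullback of the relevant bundle, since $R$ is $\iota$-invariant and $2R\sim\eta^*(\text{branch})$). So $K_C-\eta^*(g^s_e)\sim\eta^*(K_E+B-\h{E})$, which is a pullback under $\eta$ of a linear series on $E$. A base-point-free pullback $\eta^*\h{F}$ of a series on $E$ of dimension $\ge 1$ factors through $\eta$, hence is compounded; the only thing to check is that the moving part is still a pullback (removing an $\iota$-invariant base locus keeps it a pullback, and one argues the base divisor can be taken $\iota$-invariant since $K_C-\eta^*(g^s_e)$ is itself $\iota$-invariant as a divisor class). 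The main point is that $K_C(-\eta^*(g^s_e))$ is $\iota$-anti... no, $\iota$-invariant, and an $\iota$-invariant complete linear series of dimension $\ge 1$ either separates a pair swapped by $\iota$ or not; if it never does, it descends. I would argue that for $h\ge 1$ the dimension count prevents it from being birationally very ample by comparison of arithmetic genus, so the morphism it induces factors through $C\to E$ up to the involution's orbit space, giving a covering of degree $\ge 2$.

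For part (ii), given a trigonal curve $C$ of genus $g\ge 5$ with $g^1_3$ the (unique, if $g\ge 5$) trigonal pencil, and $\Delta$ effective of positive degree with $\dim|2g^1_3+\Delta|=2$, the plan is to identify $|2g^1_3+\Delta|$ explicitly and then dualize. First I would recall that on a trigonal curve, by the geometric Riemann–Roch / base-point-free pencil trick applied to $g^1_3$, the linear series $|mg^1_3|$ has dimension $m$ for $m$ in the appropriate range, and any $g^2_d$ on $C$ is either compounded with $g^1_3$ (i.e. of the form $|2g^1_3+\Delta'|$ with the map factoring through the plane conic/line image) or is birationally very ample — but a birationally very ample $g^2_d$ forces $C$ to have a plane model, which for a trigonal curve with $g$ large is excluded for small $d$. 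So $|2g^1_3+\Delta|=g^2_d$ with $d=6+\deg\Delta$ is compounded: the morphism factors as $C\xrightarrow{g^1_3}\PP^1\hookrightarrow\PP^2$ followed by... more precisely the $g^2_d$ is composed with the $g^1_3$, meaning $|2g^1_3+\Delta|$ pulls back a $g^2_?$ from $\PP^1$ — but $\PP^1$ has no $g^2$, so really the map $\phi_{|2g^1_3+\Delta|}\colon C\to\PP^2$ has image a curve $E'$ and factors $C\to E'$ through $\eta\colon C\to\PP^1$, forcing $E'$ to be a line or conic, and since $\dim=2$ the image is a plane conic, so $\phi$ factors through $C\xrightarrow{2:1 \text{ or } 3:1}$ a conic. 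Actually the cleanest statement: $\dim|2g^1_3+\Delta|=2$ and $|2g^1_3|\subseteq|2g^1_3+\Delta|$ with $\dim|2g^1_3|=2$ already (for $g\ge 5$, by the free pencil trick: $h^0(2g^1_3)=h^0(g^1_3)+h^0(g^1_3\otimes K_C^{-1}\cdot(\dots))$, giving exactly $3$ when $g\ge 5$), so $|2g^1_3+\Delta|=|2g^1_3|+\Delta$, i.e. $\Delta$ is contained in the base locus! That is the key observation. Hence $|K_C-2g^1_3-\Delta|=|K_C-2g^1_3|-(\text{fixed part related to }\Delta)$... more carefully $|K_C-2g^1_3-\Delta|+\Delta\subseteq|K_C-2g^1_3|$, and one checks $h^0(K_C-2g^1_3-\Delta)=h^0(K_C-2g^1_3)$ as well by Riemann–Roch using $\dim|2g^1_3+\Delta|=\dim|2g^1_3|$. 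Then for any point $p\in\text{supp}(\Delta)$, the series $|K_C-2g^1_3-\Delta|$ has $p$ in its base locus, hence fails to be base-point-free at $p$, so in particular is not very ample. (If one wants $\Delta$'s support not in the base locus a priori, note $\Delta$ can be replaced by its reduced support and the displayed inequality $\dim|2g^1_3+\Delta_{\mathrm{red}}|\le\dim|2g^1_3+\Delta|=2$ together with $\ge 2$ gives equality, so the same argument runs.)

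The main obstacle in (i) is the bookkeeping with the line bundle $B$ and making the identification $\Oo_C(R)\cong\eta^*B$ and $K_C\cong\eta^*(K_E\otimes B)$ clean, and then rigorously passing from "the class $K_C-\eta^*(g^s_e)$ is a pullback" to "its base-point-free part is a pullback" (handling the base locus equivariantly). The main obstacle in (ii) is establishing $h^0(2g^1_3)=3$ for $g\ge 5$ and ruling out that $|2g^1_3+\Delta|$ could be birationally very ample rather than composed with the trigonal pencil — for $g\ge 5$ one invokes uniqueness of $g^1_3$ and a Castelnuovo-type bound, or simply the classification of $g^2_d$'s on trigonal curves, to conclude the series is composed with $g^1_3$ and hence $\Delta$ is base; once that is in hand the non-very-ampleness is immediate. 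I would expect part (ii) to be genuinely short and part (i) to carry the bulk of the technical weight.
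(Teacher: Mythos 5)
Your part (i) is workable and takes a more structural route than the paper. The paper's own argument is a one‑liner: since $g^s_e$ is non‑special, $\dim|g^s_e+p|=s+1$ for every $p\in E$, hence $\dim|\eta^*(g^s_e)+\eta^*(p)|\ge s+1$, so the residual series fails to separate the two points of every fibre of $\eta$ and is therefore compounded. Your route via $\eta_*\Oo_C=\Oo_E\oplus B^{-1}$ and $K_C=\eta^*(K_E\otimes B)$ arrives at the same place, but the step you leave vague is exactly where non‑specialness must be used a second time: to know that $|K_C-\eta^*L|=\eta^*|K_E+B-L|$ (so that the map really factors through $\eta$) you need the second summand $H^0(E,K_E-L)$ of $H^0(C,\eta^*(K_E+B-L))$ to vanish, which is precisely $L$ non‑special. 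The $\iota$‑invariance of the divisor class alone does not give this, and the detour through arithmetic‑genus comparisons is unnecessary once that vanishing is noted.

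Part (ii) contains a genuine error. From $\dim|2g^1_3+\Delta|=2=\dim|2g^1_3|$ you correctly conclude $\Delta\subseteq\operatorname{Bs}|2g^1_3+\Delta|$, but the dualization then goes the wrong way: Riemann--Roch gives $h^0(K_C-2g^1_3-\Delta)=h^0(K_C-2g^1_3)-\deg\Delta$, not $h^0(K_C-2g^1_3)$. Indeed, $h^0(2g^1_3+\Delta)=h^0(2g^1_3)$ says precisely that $\Delta$ imposes \emph{independent} conditions on $|K_C-2g^1_3|$, which is the opposite of what you need; it does not put any point of $\Supp\Delta$ in the base locus of $|K_C-2g^1_3-\Delta|$ (for $p$ to be a base point you would need $\dim|2g^1_3+\Delta+p|=3$, which does not follow and is generally false — compare Lemma \ref{veryample}, where the analogous series $|K_C-2g^1_k-\Delta|$ for $k\ge4$ is even very ample). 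The missing idea is that the obstruction sits at a \emph{different} pair of points: take $q\in\Supp\Delta$ and let $q+t+s\in g^1_3$ be the trigonal divisor through $q$; then
\begin{equation*}
\dim|2g^1_3+\Delta+t+s|=\dim|3g^1_3+(\Delta-q)|\ge\dim|3g^1_3|\ge 3>2=\dim|2g^1_3+\Delta|,
\end{equation*}
so $|K_C-2g^1_3-\Delta|$ fails to separate $t$ and $s$ and hence is not very ample. This is the paper's argument, and as it stands your write‑up of (ii) does not reach the conclusion.
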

\begin{proof}
(i) Note that for any $p\in E$,  $|g^s_{e}+p|=g^{s+1}_{e+1}$ since $g^s_{e}$ is non-special, 
 $$\dim|\eta^*(g^s_{e})+\eta^*(p))|=\dim|\eta^*(g^s_{e}+p)|=\dim|\eta^*(g^{s+1}_{e+1})|\ge \dim\h{E} +1.$$
 Hence $|K_C(-\eta^*(g^s_{e}))|$  is compounded.
 
 \vni
 (ii) Choose $q\in\text {Supp}(\Delta)$ and the trigonal divisor $q+t+s\in g^1_3$ containing $q$. $|2g^1_3+\Delta|^\vee$ is not very ample since 
\begin{align*}
\dim|2g^1_3+\Delta+t+s|
&=\dim| (2g^1_3+(q+t+s)+(\Delta-q)|\\&=\dim|3g^1_3+(\Delta -q)|\ge\dim|2g^1_3+\Delta|+1.
\end{align*}
\end{proof}
\begin{example}\label{first} Fix $r\ge 15$, $g=r+7$, $d=2r+4$. 
Then $\HL{2r+4,r+7,r}$ is reducible with a component $\Hh_{4,0}$ dominating $\Mm^1_{g,4}$,
\begin{align*}
\dim\Hh_{4,0}&=\dim\Mm^1_{g,4}+\dim\Aut(\PP^r)>\Xx(2r+4,r+7,r), \\\codim\mu(\Hh_{4,0})&=\codim\Mm^1_{g,4}=g-6<g-5.
\end{align*}
\end{example}
\begin{proof}
Assume  $\HL{2r+4,r+7,r}\neq\emptyset$. Let $C\in\Hh\subset\HL{2r+4,r+7,r}$ be a general element. Since $g=r+7\ge 22>\binom{7}{2}$, $\Ee:=|K_C(-1)|=g^2_8$ is compounded, inducing a morphism $\eta: C\to E\subset\PP^2$, $h=\text{genus}(E)$.
Note that $\deg\eta\cdot\deg E\le \deg\Ee=8$, $\deg E\ge 2$, thus we have 
 $$(\deg\eta,\deg E, h)\in \{(2,4,3), (2,4,2), (2,3,1), (2,3,0),(3,2,0),(4,2,0)\}.$$
By Lemma \ref{easylemma}, we have either $(\deg\eta, h)=(2,3)$ or $(4,0)$, i.e. $C\in\Xx_{2,3}$ or $C\in\Mm^1_{g,4}$,
where $\Xx_{2,3}\subset\Mm_g$ denotes the locus of double coverings of curves of genus $h=3$. 

\vni
(i) For $C\in\Xx_{2.3}$, $|K_C-\eta^*(\Oo_E(1))|=|K_C-\eta^*(K_E)|=g^r_{2r+4}$ is very ample; cf. \cite[Proposition 4.4]{KM}. Set
$$\Ff_{(2.3)}:=\{|K_C-\eta^*(\Oo_E(1))|; C\in\Xx_{2,3}, C\stackrel{\eta}{\to} E\}\subset\Gg^r_{2r+4}.$$
Recall that $\Xx_{2,3}$ is of pure dimension
$2g-4$, thus $$\dim\Ff_{(2,3)}=2g-4=2r+10> \lambda (2r+4,r+7, r)=r+22.$$
(ii) Let $C$ be general $4$-gonal curve of genus $g$. Note that $|K_C-2g^1_4|=g^r_{2r+4}$
is very ample \cite[Corollary 3.3]{Keem}. We set
$$\Ff_{4,0}:=\{|K_C-2g^1_4|; C\in\Mm^1_{g,4}\}\subset\Gg^r_{2r+4}.$$
 Since there is only one $g^1_4$ on $C$, we have 
 \begin{equation}\label{4gonal}
 \dim\Ff_{4,0}=\dim\Mm^1_{g,4}=2g+3=2r+17\ge\lambda(2r+4,r+7,r).
 \end{equation}
 
 \noindent
Note that  $\dim\Ff_{4,0}>\dim\Ff_{(2,3)}$ whereas $\text{gon}(C_{4,0})<\text{gon}(C_{(2,3)})=6$ ($C_{4,0}\in\Ff_{4,0}$. \&
$C_{(2,3)}\in\Ff_{(2,3)}$)
thus 
 the family $\Ff_{(2,3)}$ is not in the boundary of $\Ff_{4,0}$ by (lower) semi-continuity of gonality from which the reducibility of $\HL{2r+4,r+7,r}$ follows.
 
  In all,  $\HL{2r+4,r+7,r}\neq\emptyset$ has at least two components with one component $\Hh_{4,0}$
 dominating $\Mm^1_{g,4}$, 
\begin{align*}
\dim\Hh_{4,0}&=\dim\Ff_{4,0}+\dim\Aut(\PP^r)>\Xx(2r+4,r+7,r), \\\codim\mu(\Hh_{4,0})&=\codim_{\Mm_g}\Mm^1_{g,4}=g-6<g-5.
\end{align*}
\end{proof}
\begin{remark}\label{discussion} (i) In Example \ref{first}, we assumed $r\ge 15$ in order to avoid complicated situation. For  $3\le r\le14$, $\HL{2r+4,r+7,r}\neq\emptyset$ \& the number of irreducible components and dimension of each component varies depending on $r$ ; cf. \cite[Table 1 ]{lengthy}.

\vni
(ii) It is worthwhile to note that for every $r\ge 5$ there is a component $\Hh_{4,0}\subset\HL{2r+4,r+7,r}$ dominating $\Mm^1_{g,4}$. The equality in \eqref{4gonal} holds only for $r=5$ and strictly inequality for $r\ge 6$. 

\vni
(iii) Thus one may realize that $\codim\mu(\Hh)$ may not the only quantity which governs whether $\dim\Hh=\Xx(d,g,r)$ 
or $\dim\Hh>\Xx(d,g,r)$. More refined upper bound $\beta(g)$ involving other invariants or attributes such as the 
dimension of the ambient projective space needs to be found, if there is any.
\end{remark}

Next we present an example of a Hilbert scheme
 $\HL{d,g,r}$ 
 having several distinct components $\Hh_{k,\delta}\subset\HL{d,g,r}$ 
 dominating $\Mm^1_{g,k}$ such that 
$$\dim\Hh_{k,\delta}>\Xx({d,g,r}), \codim\mu({\Hh_{k,\delta}})\le g-2k+2$$
 {\it for every} $k$'s ($k\ge 4$) in a certain range with respect to $(d,g,r)$.
The main idea involved is the following
\begin{remark} (i) First we consider a general $k$-gonal curve, $k\ge 4$. Then subtract multiple (double) of $g^1_k$ from the canonical series to get a very ample $\Ll=|K_C-mg^1_k|$ as we did in Example \ref{first}.
\vni
(ii) We then make successive projections of the smooth curve $C_\Ll$ induced by $\Ll$ from several $\delta$ general points on $C_\Ll$. 

\vni
(iii) In this way, we arrive at an irreducible family (hopefully a component) $\Hh_{k,\delta}$ dominating $\Mm^1_{g,k}$
whereas $\dim\Hh_{k,\delta}$ 
is big enough,
exceeding $\Xx(d,g,r)$.
\end{remark}
Indeed, this example originated from a study of Hilbert schemes of linearly normal curves with index of speciality $\alpha =g-d+r=3$  without any emphasis or attention on the study of Conjecture \ref{con};  \cite{lengthy}. The main focal point of  \cite{lengthy} was the classical irreducibility problem of Hilbert scheme of linearly normal curves 
with low index of speciality. 
However, we utilize and extract results in \cite{lengthy} adjusted to our current theme and generalize Example \ref{first}. Not only for the convenience of readers but also for maximizing self-containedness of the paper, 
we reproduce
essential  ingredients and steps. 
\begin{lemma}
\cite[Proposition 1.1]{CKM}\label{kveryample} Assume $\rho(k,g,1)=2k-g-2<0$. Let $C$ be a general $k$-gonal curve of genus $g$, $k\ge 2$. Suppose  
\begin{equation}\label{veryamplek}
g\ge 2m+n(k-1)
\end{equation}
for some  $0\le m$, $n\in\mathbb{Z}$. 
 and let $D\in C_m$. Assume that there is no $E\in g^1_k$ with $E\le D.$ Then $\dim|ng^1_k+D|=n$.
\end{lemma}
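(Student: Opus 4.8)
The plan is to induct on $n$ while carefully exploiting the genericity of $C$ and hypothesis \eqref{veryamplek}. First I would dispose of the base case $n=0$, where the claim $\dim|D|=0$ is immediate: a general $k$-gonal curve of genus $g$ with $\rho(k,g,1)<0$ has no $g^1_m$ for $m<k$ other than multiples of $g^1_k$ (this is the content of the genericity, via Brill--Noether-type theory for $k$-gonal curves, e.g. the results of Coppens--Martens on linear series on general $k$-gonal curves), and since no $E\in g^1_k$ satisfies $E\le D$ and $\deg D=m<k$ (as $g\ge 2m$ forces $m\le g/2<k$ under $\rho<0$, wait — one should check $m<k$: from $2k>g+2\ge 2m+2$ we get $k>m$, so indeed $\deg D<k$), the divisor $D$ moves in no pencil, giving $\dim|D|=0$. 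The case $n=1$ requires showing $\dim|g^1_k+D|=1$, i.e. that adding $D$ to the pencil $g^1_k$ does not increase the dimension; this should follow from the base-point-freeness of $g^1_k$ together with the assumption $E\not\le D$ for $E\in g^1_k$, via a direct argument on the exact sequence $0\to \Oo_C(g^1_k)\to\Oo_C(g^1_k+D)\to\Oo_D\to 0$ after checking $h^0(\Oo_C(g^1_k+D-q))$ for $q$ in the support of $D$.

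For the inductive step, assuming $\dim|ng^1_k+D|=n$ and $g\ge 2m+(n+1)(k-1)$, I would aim to prove $\dim|(n+1)g^1_k+D|=n+1$. The natural tool is the base-point-free pencil trick applied to the pencil $g^1_k$ and the line bundle $L=\Oo_C((n)g^1_k+D)$: one has an exact sequence expressing $h^0(L\otimes \Oo_C(g^1_k))$ in terms of $h^0(L)$, $h^0(L(-g^1_k))=h^0(\Oo_C((n-1)g^1_k+D))$, and the kernel of the multiplication map $H^0(\Oo_C(g^1_k))\otimes H^0(L)\to H^0(L\otimes\Oo_C(g^1_k))$. Concretely, the base-point-free pencil trick gives $0\to H^0(L(-g^1_k))\to H^0(g^1_k)\otimes H^0(L)\to H^0(L(g^1_k))\to H^1(L(-g^1_k))\to\cdots$. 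Using the inductive hypothesis to control $h^0(L)=n+1$ and $h^0(L(-g^1_k))=n$ (this latter needs $g\ge 2m+n(k-1)$, which holds), the desired equality $h^0(L(g^1_k))=n+2$ reduces to showing the connecting/multiplication map behaves as expected, equivalently that $H^1(L(-g^1_k))=H^1(\Oo_C((n-1)g^1_k+D))$ has the dimension forced by Riemann--Roch, i.e. that $(n-1)g^1_k+D$ is again nonspecial or has the expected $h^1$. Alternatively, and perhaps more cleanly, one runs the induction by adding one $g^1_k$ at a time and tracking $h^1$: the condition \eqref{veryamplek} is precisely what guarantees $ng^1_k+D$ stays in the range where $h^1$ drops by the expected amount.

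The main obstacle I anticipate is the genericity input: one must use that $C$ is a \emph{general} $k$-gonal curve to rule out unexpected jumps, i.e. to ensure that the only linear series contributing extra sections are the "obvious" ones built from $g^1_k$. This is where I would invoke the structural results on $W^r_d$ of general $k$-gonal curves (Coppens, Keem, Martens et al. — essentially that a general $k$-gonal curve has no "unexpected" special linear series beyond those forced by the $k$-gonal pencil), and the precise bookkeeping of when $|ag^1_k + D'|$ has dimension exactly $a$ as $D'$ ranges over effective divisors disjoint (in the relevant sense) from the fibers of the $g^1_k$. A secondary technical point is verifying the base-point-freeness needed for the pencil trick and checking that $D$ (and its partial sums with multiples of $g^1_k$) impose independent conditions — but these should be routine once the genericity is set up. I would also double-check the edge case where $g = 2m + n(k-1)$ exactly, since there the relevant $h^1$ is on the boundary and the induction is tightest; hypothesis \eqref{veryamplek} being stated with $\ge$ rather than $>$ suggests this boundary case is exactly the one that makes the statement sharp, so it deserves explicit attention rather than being swept into "routine calculation."
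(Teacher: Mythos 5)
First, a point of reference: the paper does not prove this lemma at all --- it is imported verbatim as \cite[Proposition 1.1]{CKM} --- so there is no in-paper argument to compare yours against; I am judging the proposal on its own merits. On those merits there are two concrete gaps, one in the base case and one in the inductive step. For the base case, your arithmetic is reversed: $\rho(k,g,1)<0$ means $2k<g+2$, not $2k>g+2$, so you cannot conclude $m<k$; for $n=0$ the hypothesis only gives $m\le g/2$, which can far exceed $k$. The claim $\dim|D|=0$ for \emph{every} $D\in C_m$ with $m\le g/2$ containing no gonal fiber is then essentially the full strength of the genericity statement (it asserts the nonexistence of any pencil of degree $\le g/2$ on a general $k$-gonal curve other than those forced by $g^1_k$), and invoking ``Coppens--Martens-type results on general $k$-gonal curves'' here is close to circular, since the lemma under discussion is precisely one of those results.

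For the inductive step, the base-point-free pencil trick runs the wrong way. Writing $L_j=\Oo_C(jg^1_k+D)$ and $M=\Oo_C(g^1_k)$, the sequence $0\to L_jM^{-1}\to H^0(M)\otimes L_j\to L_jM\to 0$ gives $h^0(L_{j+1})\ge 2h^0(L_j)-h^0(L_{j-1})$, a \emph{lower} bound, whereas the lemma is an \emph{upper} bound ($\dim|ng^1_k+D|\le n$ is the content; $\ge n$ is trivial). The cokernel of the multiplication map is controlled by $H^1(L_{j-1})$, and if the induction is on track ($h^0(L_{j-1})=j$) then Riemann--Roch gives $h^1(L_{j-1})=g-1+j-(j-1)k-m$, which is strictly positive throughout the range $g\ge 2m+n(k-1)$; so the pencil-trick upper bound $h^0(L_{j+1})\le j+2+h^1(L_{j-1})$ is far too weak and the induction does not close. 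You correctly flag that genericity must rule out ``unexpected jumps,'' but that is exactly the missing mechanism: a proof along these lines has to exploit the generality of $C$ globally --- in \cite{CKM} and related work this is done by degenerating to an explicit $k$-gonal model (admissible covers, or curves on scrolls/cones) where the $h^0$ can be computed and then applying semicontinuity --- rather than by formal cohomological bookkeeping on a fixed abstract curve. As it stands, both pillars of the proposal reduce to the hard statement without proving it.
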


\begin{lemma}\label{veryample}Let $C$ be a general $k$-gonal curve of genus $g$. We assume 
$$\rho(k,g,1)<0, \,k\ge 4, r\ge 3,  \,\delta:=g-r+1-2k\ge 0, \,g\le 2r+2.$$
For  a general $\Delta\in C_\delta=C_{g-r+1-2k}$, the complete linear series  
$$|K_C-2g^1_k-\Delta|=g^{g-2k+1-\delta}_{2g-2-2k-\delta}=g^r_{g+r-3}$$ is very ample. 
\end{lemma}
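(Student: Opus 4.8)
The plan is to prove very ampleness of $\Ll := |K_C - 2g^1_k - \Delta|$ by first computing its dimension via Riemann--Roch and a careful application of Lemma \ref{kveryample}, and then verifying the very ampleness by checking that no length-two subscheme fails to impose independent conditions, again translating this into a statement about the linear series $|2g^1_k + \Delta + p + q|$ on the $k$-gonal curve $C$ and invoking Lemma \ref{kveryample} together with the fact (used already in Example \ref{first}) that $|K_C - 2g^1_k|$ is very ample on a general $k$-gonal curve.

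\textbf{Step 1: the dimension count.} Since $\rho(k,g,1)<0$, the curve $C$ has a unique $g^1_k$ and $h^1(g^1_k)>0$. I would apply Lemma \ref{kveryample} with $n=2$ and $m=\delta=g-r+1-2k$: the hypothesis $g\ge 2m+n(k-1)=2(g-r+1-2k)+2(k-1)$ rearranges to $g\le 2r+2$, which is exactly one of our assumptions, so (for general $\Delta\in C_\delta$, which contains no divisor of $g^1_k$) we get $\dim|2g^1_k+\Delta|=2$, i.e. $h^0(2g^1_k+\Delta)=3$. By Riemann--Roch, $h^0(K_C-2g^1_k-\Delta) = (g-1) - \deg(2g^1_k+\Delta) + h^0(2g^1_k+\Delta) = g-1-(2k+\delta)+3 = g+1-2k-\delta = r$ after substituting $\delta=g-r+1-2k$. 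Hence $\Ll=g^{r}_{2g-2-2k-\delta}$, and $2g-2-2k-\delta = 2g-2-2k-(g-r+1-2k) = g+r-3$, matching the claimed $g^r_{g+r-3}$.

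\textbf{Step 2: very ampleness.} To show $\Ll$ is very ample it suffices to show that for every effective divisor $p+q$ of degree $2$ on $C$ (including $p=q$) one has $\dim|K_C-2g^1_k-\Delta-p-q| = \dim\Ll - 2 = r-2$; equivalently, by Riemann--Roch, $h^0(2g^1_k+\Delta+p+q) = h^0(2g^1_k+\Delta)=3$, i.e. adding the general degree-two divisor $p+q$ does not increase the dimension of $|2g^1_k+\Delta|$. I would argue this in two parts. For $\Delta$ and $p+q$ together generic, I would like to apply Lemma \ref{kveryample} once more, now with the degree-$(\delta+2)$ divisor $\Delta+p+q$ in the role of ``$D$'', $n=2$: this needs $g\ge 2(\delta+2)+2(k-1)$, i.e. $g\le 2r-2$; when this holds and $\Delta+p+q$ contains no divisor of $g^1_k$, we directly get $\dim|2g^1_k+\Delta+p+q|=2$ as wanted. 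The boundary cases $g=2r$, $g=2r+1$, $g=2r+2$ and the special positions of $p,q$ (for instance $p+q$ or part of it lying in a fiber of $g^1_k$, or $p=q$) have to be handled separately by a residuation argument: subtract $2g^1_k$ from the canonical series and instead work with the very ample embedding $C_{|K_C-2g^1_k|}\hookrightarrow\PP^{r+\delta}$ from Example \ref{first}/\cite[Corollary 3.3]{Keem}, realizing $\Ll$ as the projection of that curve from the $(\delta-1)$-plane spanned by a general $\Delta$, and then checking that a general such projection is still an embedding — i.e. that $\Delta$ imposes independent conditions and no secant line through two points of $C_{|K_C-2g^1_k|}$ meets the span of a general $\Delta$. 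This is a standard ``general projection stays embedded'' estimate: the secant variety of the image curve has dimension $\le 3$, while the center of projection moves in a $\delta$-dimensional family, and one only needs $\delta \le (r+\delta) - 3$, i.e. $r\ge 3$, which is assumed.

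\textbf{Main obstacle.} I expect the genuine difficulty to be the very ampleness step in the extremal range $2r-1\le g\le 2r+2$, where Lemma \ref{kveryample} is no longer directly applicable to $\Delta+p+q$ and one must instead control the secant/tangent behaviour of the curve $C_{|K_C-2g^1_k|}$ relative to a general center of projection — in particular ruling out that $p+q$ lies on a fiber of $g^1_k$ in a way that creates a trisecant through $\langle\Delta\rangle$. Handling the coincident case $p=q$ (tangent vectors) and the interaction with the unique $g^1_k$ simultaneously is the delicate point; everything else is bookkeeping with Riemann--Roch and the genus inequalities $\delta\ge 0$, $g\le 2r+2$.
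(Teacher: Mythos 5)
Your overall strategy is the same as the paper's: reduce very ampleness of $|K_C-2g^1_k-\Delta|$ to the statement that $\dim|2g^1_k+\Delta+p+q|=2$ for every $p+q\in C_2$, and obtain the latter from Lemma \ref{kveryample}. However, an arithmetic slip derails the second half of your argument. With $D=\Delta+p+q$, $m=\delta+2$, $n=2$, the hypothesis $g\ge 2m+n(k-1)$ of Lemma \ref{kveryample} reads $g\ge 2(\delta+2)+2(k-1)=2\delta+2k+2$, which after substituting $\delta=g-r+1-2k$ becomes $g\le 2r+2k-4$ --- not $g\le 2r-2$. Since $k\ge 4$ one has $2r+2k-4\ge 2r+4>2r+2\ge g$, so the numerical hypothesis is satisfied throughout the stated range and there are no ``boundary cases'' $2r-1\le g\le 2r+2$ to treat separately. (The same slip already occurs in your Step 1, where the condition rearranges to $g\le 2r+2k$, not $g\le 2r+2$; it is still satisfied there, so no harm is done.)

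This matters because the fallback you propose for the (nonexistent) boundary range is not sound as stated: you want to realize the curve embedded by $|K_C-2g^1_k-\Delta|$ as the projection of the curve embedded by $|K_C-2g^1_k|$ in $\PP^{r+\delta}$ from the span of $\Delta$, and invoke the count that a general $(\delta-1)$-plane misses the (at most $3$-dimensional) secant variety. But your center of projection is forced to be spanned by $\delta$ points \emph{of the curve}, so it lies on the secant variety by construction; whether the projection remains an embedding away from $\Delta$ is exactly what you are trying to prove, and the general-position count does not apply. You also gloss over the other hypothesis of Lemma \ref{kveryample}, that no $E\in g^1_k$ satisfies $E\le D$. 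The paper handles this by choosing $\Delta$ so that no two of its points lie in a common fiber of the $g^1_k$ (a dense open condition in $C_\delta$, since the locus of bad $\Delta$ has dimension $\delta-1$); then for an arbitrary $p+q$, any fiber $E$ meets $\Delta+p+q$ in degree at most $3<k$, so the hypothesis holds uniformly in $p+q$ --- including $p=q$ and $p,q$ lying in a fiber --- and this is precisely where $k\ge 4$ is used. With the arithmetic corrected and this genericity observation added, your Step 2 closes with no auxiliary projection argument, and the proof coincides with the paper's.
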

\begin{proof} Let $g^1_k$ be a unique pencil of degree $k$ on $C$. Consider the cycle of all effective
divisors of degree $2$ that are subordinate to $g^1_k$, i.e.
$$g^1_{k,2}:=\{D\in C_2: D\le E \textrm{ for some } E\in g^1_k\}. $$
Set 
$$\Sigma:=\{(\Delta, D): \Delta\ge D\}\subset C_\delta\times g^1_{k,2},$$
and let $\pi_1:\Sigma\to C_\delta, \pi_2: \Sigma \to g^1_{k,2}$ be projections.
For $D\in g^1_{k,2}$, $$\pi_2^{-1}(D)\cong C_{\delta-2}, \quad \dim\Sigma =\dim\pi_2^{-1}(D)+\dim g^1_{k,2}=\delta-1.$$
Hence $\pi_1(\Sigma)\subsetneq C_\delta.$ Set 
\begin{equation}\label{sigma}
\overset{o}{C}_\delta:=C_\delta\setminus\pi_1(\Sigma)
\end{equation}
and  take $$\Delta=r_1+\cdots +r_\delta\in \overset{o}{C}_\delta\,$$ i.e., 
no pair $\{ r_i, r_j\}$ lies on a fiber of the unique $k$-sheeted covering of $\PP^1$. 
 For an arbitrary choice $s+t\in C_2$, we take $ D=\Delta +s+t$,  $n=2$ and $m=\delta +2$ in Lemma \ref{kveryample}.  
By the assumption $g\le 2r+2$, we see that the numerical assumption (\ref{veryamplek}) in Lemma \ref{kveryample} ( $g\ge 2m+n(k-1)$) is satisfied.  By our choice of $\Delta$, we have  $\deg\gcd (D,E)\le 3$ for any $E\in g^1_k$ and hence there is no $E\in g^1_k$ such that  $E\le D$ as long as  $k\ge 4$.  Therefore we have  
$$\dim|2g^1_k+\Delta+s+t|=2, $$
for any $s+t\in C_2$ implying $|K_C-2g^1_k-\Delta|$ is very ample. 
\end{proof}
The next example is a simplified variation of \cite[Theorem 3.9]{lengthy}.
\begin{example}\label{extensive}\cite[Theorem 3.9]{lengthy} Fix a triple $(d,g,r)$ such that  $g-d+r=3, r\ge5$ and assume $$r+7\le g\le 2r+2.$$
Then 
\begin{itemize}
\item[(i)]
there is a component $\Hh_{k,\delta}\subset\HL{g+r-3,g,r}$ dominating $\Mm^1_{g,k}$ for every pair $(k,\delta)$ such that  
\begin{equation}\label{cnumber}
4\le k\le \left[\frac{g-r+1}{2}\right] \text{ and }\delta:=g-r+1-2k\ge 0.
\end{equation}
\item[(ii)]
\begin{align}\label{compare}
\dim\Hh_{k,\delta}&=3g-4-r+\dim\Aut(\PP^r)\nonumber\\&\ge 4g-3r-6+\dim\Aut(\PP^r)=\Xx(d,g,r),\\ 
\codim\mu(\Hh_{k,\delta})&=g+2-2k\le g-6.\label{kk}
\end{align}
\item[(iii)] Equality holds in \eqref{compare} if and only if $g=2r+2$. 
\item[(iv)]The number of pairs 
$(k,\delta)$ satisfying \eqref{cnumber} is 
\[
\left[\frac{g-r+1}{2}\right]-3=\begin{cases}
1; \hskip 12pt g=r+7, (k,\delta)=(4,0)\\
1; \hskip 12pt g=r+8, (k,\delta)=(4,1)\\
\ge 2; ~r+9\le g\le 2r+2.
\end{cases}
\]
\end{itemize}
\end{example}
\begin{proof} We note $$\lambda (d,g,r)=3g-3+\rho(d,g,r)=\lambda(g+r-3,g,r)=4g-3r-6.$$
By the condition $g\ge r+7$, 
$$2g-2-d=2g-2-(g+r-3)=g-r+1\ge 8.$$ For any $k\ge 4$ such that 
$g-r+1\ge 2k$, set $\delta=g-r+1-2k\ge 0$. 

We  consider the relative version of the set up -- the one we did in the beginning of the proof of Lemma \ref{veryample} -- over moving $k$-gonal curves. Let $\overset{o}{\Mm}\subset\Mm^1_{g,k}\subset\Mm_g$ be the irreducible family of automorphism free $k$-gonal curves. Let $\Cc$ be the universal curve over $\overset{o}{\Mm}$ and let $\overset{o}{\Cc}_\delta\subset\Cc_\delta$ be the open subset such that the fiber of ~~$\overset{o}{\Cc}_\delta\to\overset{o}{\Mm}$ over $C\in \overset{o}{\Mm}$ is $\overset{o}{C}_\delta:=C_\delta\setminus\pi_1(\Sigma)$; cf. \eqref{sigma}. 
For each $(k,\delta)$ satisfying \eqref{cnumber},  consider the two injective morphisms 
$$\begin{aligned}
&&&&&\Gg^1_k\underset{\overset{o}{\Mm}}{\times}\overset{o}{\Cc}_\delta\hskip 24pt&\stackrel{\zeta}{\to}&&&&\Gg^2_{2k+\delta}=\Gg^2_{g-r+1}&&\stackrel{\kappa}{\to}&&&{\Gg^2_{g-r+1}}^\vee\subset\Gg^r_{g+r-3}\\ 
\\
&&&&&\hskip5pt(g^1_k ,\Delta)
&\mapsto  &&&& |2g^1_k+\Delta|\hskip12pt&&\mapsto &&&|K_C-2g^1_k-\Delta|.
\end{aligned}$$
Let $\iota:=\kappa\circ\zeta$ and
$$\h{F}_{k,\delta}:=\iota(\Gg^1_k\underset{\overset{o}{\Mm}}{\times}\overset{o}{\Cc}_\delta)\subset\h{G}^r_{g+r-3}$$ be the image of  \,$\iota$.
The following inequality holds by the assumption $g\le 2r+2$;
\begin{equation*}\label{kcom}
\begin{split}\lambda(g+r-3,g,r)&=4g-3r-6\le\dim\h{F}_{k,\delta}\\&=
\dim\Gg^1_k\underset{\overset{o}{\Mm}}{\times}\overset{o}{\Cc}_\delta
\\&=\dim\h{G}^1_{k}+\delta\\&=3g-3+\rho(k,g,1)+\delta\\
&=3g-3+(2k-g-2)+(g-r+1-2k)\\
&=3g-4-r.
\end{split}
\end{equation*}
Each element in the irreducible locus $\h{F}_{k,\delta}$  (dominating $\Mm^1_{g,k}$) is very ample by Lemma \ref{veryample}. Let $\Gg$ be a component such that $$\h{F}_{k,\delta}\subset\Gg\subset{\tilde\Gg}_{\Ll}\subset\Gg^r_{g+r-3}.$$ We argue that $\h{F}_{k,\delta}$ is dense in $\Gg$. For this, we distinguish the following three cases.

\vni
(a)
 The  residual series $\h{E}=g^2_{g-r+1}=\Dd^\vee$  of a general $\h{D}\in\h{G}$ is birationally very ample: Recall that elements in ${\h{F}_{k,\delta}}^\vee$ are compounded,  thus
 $\h{F}_{k,\delta}\subsetneq\Gg$. 
 By  the well-known dimension estimate of a component of $\Gg^2_e$ whose general point is not compounded \cite[Theorem 10.1, pp. 845-846]{ACGH2}, 
 $$\dim\h{F}_{k,\delta}=3g-4-r\lneq\dim\Gg\le 3(g-r+1)+g-9 = 4g-3r-6,$$
 which is not compatible with the assumption $g\le 2r+2$.

 \vni 
 (b)
  Suppose the  residual series $\h{E}=g^2_{g-r+1}=\Dd^\vee$  of a general $\h{D}\in\h{G}$ is compounded 
  inducing an $n$-sheeted covering $C\stackrel{\pi}\rightarrow E$ onto an irrational curve $E$ of genus $h\ge 1$:
  Let $\Xx_{n,h}\subset\Mm_g$ be the Hurwitz space which is a closed subscheme of $\Mm_g$ consisting of degree $n$ ramified coverings of curves of genus $h$.  It is known that $\Xx_{n,h}$ is of pure dimension $2g+(2n-3)(1-h)-2$; cf. \cite[Th. 8.23, p. 828]{ACGH2}. By the assumption, there is a rational map $\eta: \Gg\to \Gg^\vee \dasharrow \Xx_{n,h}$. 
Recall that  by  a theorem of H. Martens \cite[Theorem 5.16]{EH2}, $$\dim W^r_d(C)\le d-2r-1$$ for any non-hyperelliptic curve $C$ of genus $g$ if  $g-d+r\ge 2$. For general $[C]\in\eta(\Gg)$, $$\dim\eta^{-1}([C])\le\dim G^r_{g+r-3}(C)=\dim W^r_{g+r-3}(C)\le g-r-4.$$  
 Thus we have 
 \begin{align*}
 3g-4-r&=\dim\h{F}_{k,\delta}\le\dim\Gg\le \dim\Xx_{n,h}+\dim\eta^{-1}([C])\\&\le(2g-2)+(g-r-4)=3g-6-r,
 \end{align*}
 a contradiction.
 \vni 
(c)
  Suppose that the  moving part $\Ee'$ of the residual series $\h{E}=g^2_{g-r+1}$  of a general $\h{D}\in\h{G}$
  induces a multiple covering onto a rational curve. Let $C\stackrel{\eta}\rightarrow E\subset\PP^2$
  be the morphism induced by $\Ee'$, $\deg\eta=k$.
  Set 
  $$\Psi:=\text{Bs}(\Ee), \quad\psi:=\deg\Psi 
  \quad\quad~\& ~\quad
  \Ee' :=|\Ee-\Psi|.$$ 
  Note  $\Ee'$ is a complete net (since $\Ee$ is), $\Ee'=\eta^*(\Oo_{\PP^2}(1))=\eta^*(\Oo_{E}(1))$,  thus 
  $$\deg E=2,   \quad \deg\Ee'=(g-r+1)-\psi=2k.$$ 
  Thus the residual series $\Ee=\Dd^\vee$
  of a general $\Dd\in\Gg$ is of the form $$\Ee=|\Ee'+\Psi|=|2g^1_k+\Psi|,$$where $g^1_k=|\eta^*(p)|, p\in E$. Hence
 $$\Dd=|K_C-2g^1_k-\Psi| \text{ for some }3\le k\le \left[\frac{g-r+1}{2}\right].$$ If $k=3$, $C$ is trigonal, $\Ee=|2g^1_3+\Psi|$, $\Psi\neq\emptyset$. However, this does not occur by Lemma \ref{easylemma}(ii). 
   
\vni
{\bf Conclusion:} (i) We deduce that the irreducible locus $$\h{F}_{k,\delta}=(2\h{G}^1_k+\overset{o}{\Cc}_\delta)^\vee$$ is indeed dense in a component 
$\h{G}\subset\widetilde{\h{G}}_\h{L}$,
which
gives rise to a component $\h{H}_{k,\delta}\subset \h{H}^\h{L}_{g+r-3}$  of  dimension, 
$$\dim\h{M}^1_{g,k}+\delta+\dim\Aut(\PP^r)=3g-4-r+\dim\Aut(\PP^r)$$
an $\Aut(\PP^r)$-bundle over the irreducible locus $\h{F}_{k,\delta}$;
$$\h{H}_{k,\delta} \dasharrow \h{F}_{k,\delta}\dasharrow \h{M}^1_{g,k}\subset\h{M}_g.$$
\vni 
(ii) Arguments (a), (b), (c) {\it do not} assert that the components $\h{H}_{k,\delta}\subset \h{H}^\h{L}_{g+r-3}$,
 $4\le k\le [\frac{g-r+1}{2}]$
are {\it all the components of} $\HL{g+r-3,g,r}$. There may exist other components different from 
$\h{H}_{k,\delta}$. 
\end{proof}

\section{Components with less than the expected number of moduli}

The notion of a component of the Hilbert scheme with the expected number of moduli was first  introduced by Sernesi in \cite{Sernesi} where he identified extensive class of triples $(d,g,r)$ such that $\HO{d,g,r}$ has a component $\Hh$ with the expected 
number of moduli, i.e. 
$$\dim\mu(\Hh)=3g-3+\rho(d,g,r).$$
There have been improvements on the study of Hilbert schemes along this line by works of A. Lopez and others; cf. \cite{Angelo1}, \cite{Angelo2} and references therein. Needless to say, there are many Hilbert schemes with components larger than the expected number of moduli. However, as were indicated in \cite[page 53, first paragraph]{Sernesi},  \cite[page 3467, end of Introduction]{Angelo2},
the existence of a component of a Hilbert scheme having strictly less than the expected number of moduli has not been known {\it widely enough }to the people working on this area and 
have not appeared explicitly in the literature in the context of our primary concern; {\it the number of moduli} $=\dim\mu(\Hh)$. 
We exhibit some examples of Hilbert schemes with components having the number of moduli less than expected,
mainly based on the discussion in previous sections. 

\subsection{Hilbert scheme with a component having less than the expected number of Moduli}
We exhibit the following examples of reducible Hilbert schemes with  components  having the moduli number less than expected. We retain notation used in previous section. 
\begin{example}\label{kgonal} (i) For $(d,g,r)=(17,14,6)$, $\HL{d,g,r}$
 has a component with the moduli number {\it less than} expected, dominating $\Mm^1_{g,4}$.
 \vni
 (ii) For $(d,g,r)=(31,21,13)$, $\HL{d,g,r}$ has a component with the moduli number {\it less than} expected, dominating a component of the 
 Hurwitz scheme $\Xx_{2,3}\subset\Mm_g$.
 \vni
 (iii) Let $(d,g,r)=(3r-1, 2r+2, r), r\ge 7, r\equiv1\pmod 2$.  
 
 \begin{enumerate}
\item[(iii-a)] For every $(k,\delta)$ such that 
 $$4\le k\lneqq\frac{r+3}{2}~~~~\hskip 8pt \& ~~~~\hskip 8pt\delta=g-r+1-2k>0,$$
 $\h{H}_{k,\delta}$ has {\it less than} the expected number of moduli. 
 
\item[(iii-b)] For $(k,\delta)=(\frac{r+3}{2},0)$,  $\h{H}_{k,0}$ has the expected number of moduli.
 \end{enumerate}
 (iv) For $(d,g,r)=(3r-1, 2r+2, r), r\ge 8, r\equiv0\pmod 2$, 
 $\h{H}_{k,\delta}$ has {\it less than} the expected number of moduli for 
 every $(k,\delta)$ such that
  $$4\le k\lneqq\frac{r+3}{2}~~~~\hskip 8pt \& ~~~~\hskip 8pt\delta=g-r+1-2k>0.$$


\end{example}
\begin{proof} (i) 
We consider $(d,g,r)=(2r+5, r+8, r), r=6$,  $(k,\delta)=(4,1)$ in Example \ref{extensive}.  Let 
$\Hh_{4,1}$ be the component dominating $\Mm^1_{g,4}$. We have 
$$\dim\mu({\Hh_{4,1}})=\dim\Mm^1_{g,4}=2g+3=31<3g-3+\rho(d,g,r)=32.$$

\vni (ii) For $(d,g,r)=(31,21,13)$, let $\Gg\subset \widetilde{\Gg}\subset\Gg^r_d$ be a component. We consider $\Ff:=\Gg^\vee\subset\Gg^2_{9}$. The following cases are possible. 
\begin{itemize}
\item[(1)] A general $\Ee\in \Ff$ is base-point-free and birationally very ample.
The irreducible family $\Ff$ corresponds to the Severi variety $\Sigma_{9,\zeta}$ parametrizing plane curves of degree $9$ with $\zeta=7$ nodal singularities, 
\begin{align*}
\dim\Ff&=\dim\Sigma_{9,\zeta}-\dim\Aut(\PP^2)\\
&=\dim\PP(H^0(\PP^2,\Oo_{\PP^2}(9)))-\zeta-\dim\Aut(\PP^2)\\&=
3g-3+\rho(d,g,r)=39.
\end{align*}
Let $\Hh\subset\HL{d,g,r}$ be the component over $\Gg$.
We have 
$$\dim\mu(\Hh)=\dim\Ff=3g-3+\rho(d,g,r)$$ and $\Hh$ is a component having the expected 
number of moduli.

\item[(2)] A general $\Ee\in \Ff$ is base-point-free, compounded inducing $C\stackrel{\eta}{\to} E\subset\PP^2$,
$\deg E=3$, $E$ is elliptic. On a triple covering $C\stackrel{\eta}{\to} E$ of an elliptic curve, $|K_C-\eta^*(K_E)|$ is 
very ample; otherwise there is $p+q\in C_2$ such that $|\eta^*(K_E)+p+q|=g^3_{11}$, which is base-point-free and birationally very ample, hence $\pi(11,3)\le 20$, a contradiction.  Thus 
$$C\in\Xx_{3,1},~~ \dim\Gg\ge \dim\Xx_{3,1}=2g-2=40>\lambda(d,g,r) =39.$$
This irreducible family has moduli number {\it more than expected}.
\item[(3)] A general $\Ee\in \Ff$ has non-empty base locus $\Delta$, $\deg\Delta=1$. 
$|\Ee-\Delta|$ induces $C\to F\subset\PP^2 $, $\deg F=4$, $F$ is smooth of genus $h=3$. 
We claim that $|K_C-\pi^*(K_F)-\Delta|$ is very ample: Since $|K_C-\pi^*(K_F)|$ is very ample \cite[Proposition 4.4]{KM}, 
$\dim|\pi^*(K_F)+p+q|=\dim|\pi^*(K_F)|=2$ for any $p+q\in C_2$. Suppose $\dim|\pi^*(K_F)+p+q+\Delta|\ge 3$.
Then $|\pi^*(K_F)+p+q+\Delta|=g^3_{11}$ is base-point-free,  birationally very ample,  $g\le \pi(11,3)=20$
whence the claim.
Thus 
$$\dim\mu({\Gg})=\dim\Xx_{2,3}=2g-4<\lambda(d,g,r)=39,$$
and this irreducible family  has the moduli number {\it less than} expected. 
\item[(4)] A general $\Ee\in \Ff$ has non-empty base locus $\Delta$, $\deg\Delta=1$,
$|\Ee-\Delta|$ induces $C\to F\subset\PP^2 $, $\deg F=2$, $F$ is rational, $C$ is $4$-gonal. 
This is the case in Example \ref{extensive} (iv); $g=r+8$, $(k,\delta)=(4,1)$.  We have 
$$\dim\mu({\Gg})=2g+3>\lambda(d,g,r)=39,$$
thus this family has  {\it more than} the expected number of moduli.
\item[(5)] A general $\Ee\in \Ff$ has non-empty base locus $\Delta$, $\deg\Delta\ge 2$. This is not possible
by Lemma \ref{easylemma}. 
\end{itemize}
In order to finalize our discussion, we denote by $\Hh_i\subset\HO{d,g,r},$ ($\Gg_i\subset\widetilde{\Gg}\subset\Gg^r_d$, resp.) $i=1,2,3,4$ the irreducible families obtained  in the items (1) -- (4)  above. We show that these families are dense in the
components to which they belong.
\begin{itemize}
\item[(1)] A general $C\in\Hh_1$ is $7$-gonal; $C^\vee\in\Sigma_{9,\zeta}$ with a $g^1_7$ cut out by lines through a node, by \cite{Coppens0} we have $$\text{gon}(C)=\text{gon}(C^\vee)=7,  \dim\mu(\Hh_1)=\lambda(d,g,r), \,\dim(\Hh_1)=\Xx(d,g,r).$$
\item[(2)] A general $C\in\Hh_2$ is $6$-gonal; $[C]\in\Xx_{3,1}$  with a $g^1_6=\eta^*(g^1_2)$, is $6$-gonal by Castelnuov-Severi inequality. $$\text{gon}(C)=6,  \dim\mu(\Hh_2)=\dim\Xx_{3,1}=40>\lambda(d,g,r),\dim\Hh_2>\Xx(d,g,r).$$
\item[(3)] A general $C\in\Hh_3$ has a $g^1_6=\eta^*(g^1_3)$ is $6$-gonal by Castelnuovo-Severi inequality,
$$\text{gon}(C)=6, \dim\mu(\Hh_3)=\dim\Xx_{2,3}=38<\lambda(d,g,r), \dim\Hh_3=\Xx(d,g,r).$$
\item[(4)] A general $C\in\Hh_4$ is $4$-gonal, 
$$\dim\mu(\Hh_4)=\dim\Mm^1_{g,4}=45>\lambda(d,g,r), \dim\Hh_4>\Xx(d,g,r).$$
\end{itemize}
We have $$\text{gon}(C_1)>\text{gon}(C_2)=\text{gon}(C_3)>\text{gon}(C_4)$$
while $$\dim\Hh_1=\dim\Hh_3<\dim\Hh_2<\dim\Hh_4.$$
By lower semi-continuity of gonality and obvious dimension reason as well, 

$\Hh_1$ is not in the boundary of $\Hh_2, \Hh_3, \Hh_4$, 

$\Hh_2$ is not in the boundary of $\Hh_3,\Hh_4$, 

$\Hh_3$ is not in the boundary of $\Hh_4$. 

\vni
Finally we claim that $\Hh_3$ is not in the boundary of $\Hh_2$. To see this,  we remark that any smooth limit of $k_1$-fold covers of genus $h_1$ curves must be a $k_2$-fold
cover of a genus $h_2$ curve for some $k_2\leq k_1$ and $h_2 \leq h_1$; from the theory of admissible covers,  when branched covers degenerate, one can keep the branch points separated and allow the domain/target curves to become nodal.  In this process the (arithmetic) genus remains the same, but the domain/target curves may become reducible whereas each irreducible component may have smaller genus and the degree of the cover restricted to a component may possibly become smaller;  cf. \cite[3G]{H3}  and \cite{HM}. This finishes the proof of the claim.
\vni
(iii), (iv): Recall for $(d,g,r)=(3r-1, 2r+2, r)$, $\lambda(d,g,r)=5r+2$ \& 
$$\dim\mu(\Hh_{k,\delta})=\dim\Mm^1_{g,k}=2g+5-2k=4r-1+2k;$$ cf. \eqref{kk}. 
Hence $$\dim\mu(\Hh_{k,\delta})<\lambda(d,g,r)$$
if and only 
\[4\le k\lneq \frac{r+3}{2} ~~\quad\&~~\quad
\begin{cases}
r\ge 7, \quad r\equiv1\pmod 2\\
r\ge 8,  \quad r\equiv 0\pmod 2
\end{cases}
\]
\end{proof}

\begin{remark} (i) The author believes that there exist further examples of Hilbert schemes having a component with less than the expected number of moduli.

\vni
(ii) Examples  we exhibited in this section are based on the following;
\begin{itemize}
\item[(a)] We first identify Hilbert scheme with a component $\Hh\subset \HL{d,g,r}$, $\Hh\to \Gg\to \Mm_g$ such that 
the irreducible family $\Gg^\vee\subset\Gg^{g-d+r-1}_{2g-2-d}$  consists of compounded linear series.
\item[(b)] We next check if the members $\Ee\in\Gg^\vee$ are allowed to have non-empty base locus $\Delta$, $|\Ee'|=|\Ee-\Delta|$, and then maximize the degree of the
base locus $\Delta$ so that $|K_C-\Ee|=|K_C-\Ee'-\Delta|$ is still very ample. 
\item[(c)] In this way, $\Gg^\vee$ (and hence $\Gg$) dominates an irreducible locus in $\Mm_g$ which forms an irreducible family with {\it relatively small dimension} consisting of the class of curves induced by the base point free part
$\Ee'$ of $\Ee\in\Gg^\vee$. 
\item[(d)] On the other hand, due
to the existence of base locus with positive degree, $\Gg^\vee$ is big enough to form a component but with smaller number
of moduli and hence the same thing holds for $\Hh\to\Gg\to\Mm_g$.
\end{itemize}
\end{remark}

\begin{remark}\label{discussion3} (i) There exist several irreducible or reducible Hilbert schemes $\HO{d,g,r}$ with all the  components
having more than the expected number of moduli. Such examples are not so rare, e.g.  the Hilbert scheme of 
Castelnuovo curves \cite{CC} or \cite[Proposition 5.1]{rmi}. 

\vni
(ii) There also exist  reducible Hilbert schemes $\HO{d,g,r}$ such that all the components have the expected number of moduli, e.g. $\HO{9,10,3}$ and many others. 

\vni
(iii) Furthermore, there exist  reducible Hilbert schemes $\HO{d,g,r}$ of partially mixed type, i.e. some component have the expected number of moduli and some other components have more than the expected number of moduli. For example,  $\HO{d,g,r}=\HO{32,21,14}$ has at least three components $\Hh_i, i=1,2,3$ such that:
\begin{itemize}
\item[(1)] $\Hh_1$ is the family of smooth plane curves of degree $8$ embedded in $\PP^{14}$ by $\Oo_{\PP^2}(4)$. $\Hh_1$ has the expected number of moduli.
\item[(2)] There is a component $\Hh_2$ dominating a component of the Hurwitz scheme $\Xx_{2,3}$, embedded in $\PP^{14}$ by $|K_C-\pi^*(K_E)|$, where $C\stackrel{\pi}{\to} E$, $\deg \pi=2$, $\text{genus}(E)=3$.
$\Hh_2$ has more than the expected number of moduli.
\item[(3)] There is a component $\Hh_3$  dominating $\Mm^1_{g,4}$.
$\Hh_3$ has more than the expected number of moduli.
\item[(4)] This is the one $\HL{2r+4,r+7,r}$ ($r=14$ case) which we encountered in Example \ref{first}. We just added one more component consisting of smooth plane curves of degree $8$ embedded in $\PP^{14}$.
\end{itemize}
\vni
(iv) In Example \ref{kgonal} (ii), we demonstrated a typical example of a reducible Hilbert scheme $\HL{d,g,r}=\HL{31,21,13}$ carrying 

a component having 
{\it less than expected moduli number} together with 

a component having the {\it expected moduli number} plus 

a component with {\it more than the expected moduli number}. 
\end{remark}
\begin{question}\label{discussion4}
\begin{itemize}
\item[(i)]
As far as the author knows, all the examples of a Hilbert scheme $\HO{d,g,r}$ with a component having less than the expected number of moduli, $\HO{d,g,r}$ also carries a component with or more than the expected number of moduli. 
\item[(ii)]It would be interesting to know if there is a  Hilbert scheme $\HO{d,g,r}$ (irreducible or irreducible) such that  {\it all} the components have less than the expected number of moduli. 
\end{itemize}
\end{question}

\bibliographystyle{spmpsci} 

\end{document}